\newcolumntype{P}[1]{>{\centering\arraybackslash}p{#1}}
\newcolumntype{M}[1]{>{\centering\arraybackslash}m{#1}}
\normalfont\fontfamily{ptm}\fontsize{11}{11}\bfseries}{\thesection}{1em}{}
\normalfont\fontfamily{ptm}\fontsize{10}{11}\bfseries}{\thesubsection}{1em}{}
\normalfont\fontfamily{ptm}\fontsize{10}{11}\selectfont}{\thesubsubsection}{1em}{}
\newtheorem{theorem}{Theorem}[section]
\newtheorem{lemma}[theorem]{Lemma}
\newtheorem{proposition}[theorem]{Proposition}
\newtheorem{definition}[theorem]{Definition}
\newtheorem{example}[theorem]{Example}
\DeclarePairedDelimiter\floor{\lfloor}{\rfloor}
\providecommand{\keywords}[1]
{
  \small	
  \textbf{\textit{Keywords---}} #1
}
\newcommand{\revised}[1]{{#1}}
\newcommand{\ignore}[1]{}
\title{\large A Polyhedral Approach to Bisubmodular Function Minimization}
\author{ \small Qimeng Yu\thanks{Department of Industrial Engineering and Management Sciences, Northwestern University, Evanston, IL, USA \{kim.yu@northwestern.edu\}} \quad Simge K\"u\c{c}\"ukyavuz\thanks{Department of Industrial Engineering and Management Sciences, Northwestern University, Evanston, IL, USA \{simge@northwestern.edu\}} }
\date{\small \today} 
\begin{document}
\maketitle

\begin{abstract}
\noindent 
We consider minimization problems with bisubmodular objective functions. We propose valid inequalities, namely the poly-bimatroid inequalities, and provide a complete linear description of the convex hull of the epigraph of a bisubmodular function. Furthermore, we develop a cutting plane algorithm for constrained bisubmodular minimization based on the poly-bimatroid inequalities. Our computational experiments on \revised{the minimization subproblem in} robust coupled sensor placement show that our algorithm can solve highly non-linear problems that do not admit compact mixed-integer linear formulations. 
\end{abstract}

\keywords{bisubmodular minimization; bisubmodular polyhedra; cutting planes; convex hull; coupled sensor placement}

\section{Introduction}
\label{sect:intro}
\noindent Bisubmodularity---first considered in \cite{chandrasekaran1988pseudomatroids,qi1988directed}---is a natural extension of submodularity to set functions with two arguments. Next we give a formal definition of bisubmodularity.

\noindent Let $N=\{1,2,\dots,n\}$ be a finite non-empty set, and let $3^N = \{(S_1,S_2)\mid S_1, S_2 \subseteq N, S_1\cap S_2 = \emptyset\}$ denote the collection of all pairs of disjoint subsets of $N$. A function $f:3^N \rightarrow \mathbb{R}$ is \textit{bisubmodular} if 
\begin{align*}
f(X_1, X_2) + f(Y_1, Y_2) \geq f(X_1\cap Y_1, X_2 \cap Y_2) + f((X_1\cup Y_1 )\backslash (X_2 \cup Y_2), (X_2 \cup Y_2)\backslash (X_1\cup Y_1 ))
\end{align*} for any $(X_1, X_2), (Y_1, Y_2)\in 3^N$. Without loss of generality, we assume that $f(\emptyset,\emptyset) = 0$. By slightly abusing notation, for any $x\in \{\pm1,0\}^n$, $f(x)$ is equivalent to $f(S_1^x,S_2^x)$, or simply $f(S_1,S_2)$, where $S_1 = \{i\in N\mid x_i = 1\}$ and $S_2 = \{i\in N \mid x_i = -1\}$. Similarly, for any $(S_1, S_2)\in 3^N$, we let $x_{S_1,S_2}$ be the corresponding ternary characteristic vector. A partition of $N$ is any $(S,T)$ such that $S\cup T = N$ and $S\cap T = \emptyset$. The function $f$ is said to be \textit{bisubmodular over a partition} $(S,T)$ if $f'(X) := f(X\cap S, X\cap T)$ is submodular over $X\subseteq N$.

\noindent  \citet{ando1996characterization} provide an alternative definition of bisubmodularity. A function $f:3^N \rightarrow \mathbb{R}$ is bisubmodular if and only if 
\begin{enumerate}
\item[(A1)] the function $f$ is bisubmodular over every partition of $N$, and 
\item[(A2)] for any $(S_1, S_2)\in 3^N$ and $i\not \in S_2\cup S_2$, $f(S_1\cup\{i\}, S_2) + f(S_1,S_2\cup\{i\}) \geq 2f(S_1,S_2)$.
\end{enumerate} In later sections, we refer to these two conditions as the Ando Conditions. 

\noindent There has been growing interest in bisubmodular minimization problems of the form
\begin{equation}
\label{eq:1}
\min_{(S_1,S_2)\in 3^N} f(S_1,S_2),
\end{equation} 
where $f$ is a bisubmodular function defined over a base set $N$. \citet{qi1988directed} generalizes Lov\'asz's extension to bisubmodular functions, which suggests that bisubmodular minimization problems are polynomially solvable using the ellipsoid method. Other researchers take algorithmic approaches to tackle the \textit{unconstrained} bisubmodular minimization problems. \citet{fujishige2005bisubmodular} and \citet{mccormick2010strongly}  propose a weakly and a strongly polynomial-time bisubmodular minimization algorithm, respectively. Both algorithms are based on a min-max theorem proposed by \citet{fujishige1997min}, which establishes the equivalence of Problem \eqref{eq:1} and an $\mathcal{L}^1$-norm maximization problem over a bisubmodular polyhedron defined in Section \ref{sect:poly_ineq}.

\noindent It is known that simple constraints, such as cardinality constraints, make submodular minimization NP-hard \cite{svitkina2011submodular}. Since submodular minimization can be reduced from bisubmodular minimization, constrained bisubmodular minimization problems are also NP-hard. In contrast to the aforementioned methods for unconstrained variants, we pursue a polyhedral approach to bisubmodular minimization so that constrained variants of this problem can be solved exactly via a cutting plane method. 
 
\noindent Consider the convex hull of the epigraph of a bisubmodular function $f$, 
\[\mathcal{Q}_f = \text{conv}\{(x,z)\in \{\pm 1, 0\}^n\times \mathbb{R} \mid f(x) \leq z\}. \] 
We can equivalently state Problem (\ref{eq:1}) as
\begin{equation}
\label{eq:2}
\min\{z \mid (x,z)\in \mathcal{Q}_f\}.
\end{equation}

\noindent In our approach, we introduce a class of valid inequalities for $\mathcal{Q}_f$, which we refer to as the \textit{poly-bimatroid inequalities}. We prove that these inequalities, along with trivial bound constraints, fully describe $\mathcal{Q}_f$. Using the proposed inequalities, we propose an exact cutting plane method that solves the constrained bisubmodular minimization problems. Our computational experiments motivated by a robust coupled sensor placement problem show that our cutting plane algorithm can handle challenging constrained bisubmodular minimization problems that cannot be formulated as compact mixed-integer linear programs. 

\noindent The outline of this paper is as follows. In Section \ref{sect:poly_ineq}, we review the preliminaries of  bisubmodular polyhedra. In Section \ref{sect:convex_hull}, we propose the poly-bimatroid inequalities and establish the complete linear  description of $\mathcal{Q}_f$ using the proposed inequalities. In Section \ref{sect:cut_plane}, we develop a cutting plane algorithm for general bisubmodular minimization problems. In  Section \ref{sect:computation}, we introduce an application, namely  a robust coupled sensor placement problem, which involves a bisubmodular minimization subproblem that is  highly nonlinear. 
In Section \ref{sect:exp}, we summarize our computational experiments on this subproblem. Lastly, we include a few concluding remarks in Section \ref{sect:conclusion}. 

\section{Preliminaries}
\label{sect:poly_ineq}
\noindent In this section, we review the preliminaries of bisubmodularity. Recall that $3^N$ denotes the collection of all pairs of disjoint subsets of $N$. Given a bisubmodular function $f:3^N \rightarrow \mathbb{R}$ with $f(\emptyset, \emptyset)=0$, the set 
\[ \mathcal{P}_f = \{ \pi\in\mathbb{R}^n \mid \sum_{i\in S_1} \pi_i - \sum_{j\in S_2}\pi_j \leq f(S_1, S_2), \forall (S_1, S_2)\in 3^N\}\]  is a \textit{bisubmodular polyhedron} associated with the bisubmodular system $(3^N, f)$. Such a polyhedron is introduced by \citet{chandrasekaran1988pseudomatroids} under the name "pseudomatroid." Other researchers have also considered this concept and coined different names, such as "$\Delta$-matroid"  \cite{bouchet1995delta} and "ditroid"  \cite{qi1988directed}.   
 
\noindent \citet{ando1996structures} show that the bisubmodular polyhedron $\mathcal{P}_f$ is bounded if and only if the associated family of subsets of $N$ is $3^N$. Furthermore, the authors show that the extreme points of $\mathcal{P}_f$ can be generated by the \textit{signed greedy algorithm}  \cite{mccormick2010strongly} described in
Algorithm \ref{signedgreedy}. 

\begin{algorithm}[H]
\label{signedgreedy}
\SetAlgoLined
\textbf{Input} a permutation of $N$, namely $\delta = \{\delta_1, \delta_2, \dots, \delta_n\}$ and a sign vector $\sigma\in\{\pm 1\}^N$\;
$\pi \leftarrow \mathbf{0}$, $S_1 \leftarrow \emptyset$, $S_2 \leftarrow \emptyset$\;
\For{$i=1,2,\dots,n$}{
    \If{$\sigma_{\delta_i} = 1$}{
    $\pi_{\delta_i} \leftarrow f(S_1\cup\{\delta_i\}, S_2) - f(S_1, S_2)$\;
    $S_1 \leftarrow S_1\cup \{\delta_i\}$\;
    }
    \Else{
    $\pi_{\delta_i} \leftarrow -f(S_1, S_2\cup\{\delta_i\}) + f(S_1, S_2)$\;
    $S_2 \leftarrow S_2\cup \{\delta_i\}$\;
    }
}
   \textbf{Output} An extreme point $\pi\in\mathcal{P}_f$. 
   \caption{Signed Greedy \cite{ando1996structures, mccormick2010strongly}}
\end{algorithm} 

\vspace{0.3cm}  \noindent In this algorithm, we start with an ordering $\delta$ of $N$ and a sign vector $\sigma\in\{\pm 1\}^N$. The output is an extreme point $\pi \in\mathcal{P}_f$ that is consistent with $\delta$ and $\sigma$. In other words, for every $i\in N$, if $\sigma_i$ is 1, then $i$ is placed in the first argument of $f$. Otherwise it is included in the second argument of $f$. In either case, the $i$th entry of $\pi$ is the marginal change of the function value by appending $i$ to the chosen argument of $f$, as captured by lines 5-6 and 9-10 of Algorithm \ref{signedgreedy}.  

\noindent Furthermore, researchers have studied maximizing linear objectives over the bisubmodular polyhedra. Given any $\overline{x}\in\mathbb{R}^n$, such a maximization problem can be written as
\begin{equation}
\label{eq:linear_bisub_max}
\max\{\pi^\top \overline{x} \mid \pi \in \mathcal{P}_f\}.
\end{equation} 
Problems of this form occur as separation problems in a cutting-plane framework, which we will discuss in Section \ref{sect:cut_plane}. \citet{bouchet1987greedy} gives an $\mathcal{O}(n\log n)$ greedy algorithm to solve problem \eqref{eq:linear_bisub_max}. 
\begin{proposition} (\citet{bouchet1987greedy})
\label{prop:2}
Algorithm \ref{alg:sepa} determines an optimal $\pi$ for \eqref{eq:linear_bisub_max}. 
\end{proposition}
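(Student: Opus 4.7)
The plan is to follow the classical LP-duality template that Edmonds used for the polymatroid case, adapted to the bisubmodular setting. I first need to make explicit what Algorithm \ref{alg:sepa} does: I would read it as running the signed greedy algorithm (Algorithm \ref{signedgreedy}) with the permutation $\delta$ chosen so that $|\overline{x}_{\delta_1}| \geq |\overline{x}_{\delta_2}| \geq \cdots \geq |\overline{x}_{\delta_n}|$ and the sign vector $\sigma_i = \mathrm{sign}(\overline{x}_i)$ (breaking ties by placing indices with $\overline{x}_i = 0$ last). Let $(S_1^k, S_2^k)$ denote the state of the sets after $k$ iterations, so that $(\emptyset,\emptyset) = (S_1^0, S_2^0) \subsetneq (S_1^1, S_2^1) \subsetneq \cdots \subsetneq (S_1^n, S_2^n)$ forms a saturated chain in $3^N$.

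The first step is to establish primal feasibility, i.e., to show the output $\pi$ lies in $\mathcal{P}_f$. For any $(T_1,T_2)\in 3^N$ I would prove $\sum_{i\in T_1}\pi_i - \sum_{j\in T_2}\pi_j \leq f(T_1,T_2)$ by induction on the number of positions where $(T_1,T_2)$ disagrees with the chain produced by the algorithm. The inductive step uses the Ando conditions: condition (A1) lets us exchange the roles of two same-sign indices using ordinary submodularity along a partition, while condition (A2) handles the case of swapping an index from the first argument with one from the second. Because the marginals defining $\pi$ come directly from the chain, the bisubmodular inequality telescopes cleanly after each local exchange. This feasibility argument is the main technical obstacle and is where bisubmodularity (through both Ando conditions) is essential, in contrast to the polymatroid case which only needs (A1).

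The second step is to exhibit a matching dual solution. The LP dual of \eqref{eq:linear_bisub_max} assigns a nonnegative multiplier $y_{S_1,S_2}$ to each $(S_1,S_2)\in 3^N$ under the constraints $\sum_{(S_1,S_2):\,i\in S_1} y_{S_1,S_2} - \sum_{(S_1,S_2):\,i\in S_2} y_{S_1,S_2} = \overline{x}_i$ for every $i\in N$, minimizing $\sum_{(S_1,S_2)} y_{S_1,S_2}\, f(S_1,S_2)$. I would set
\[
y_{S_1^k,S_2^k} = |\overline{x}_{\delta_k}| - |\overline{x}_{\delta_{k+1}}| \quad (k=1,\dots,n-1), \qquad y_{S_1^n,S_2^n} = |\overline{x}_{\delta_n}|,
\]
with all other components zero, using the convention $|\overline{x}_{\delta_{n+1}}|=0$. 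Nonnegativity follows from the sorting, and the equality constraint is verified coordinate by coordinate: the sign of the contribution of index $\delta_k$ at every level $j \geq k$ is exactly $\sigma_{\delta_k}$, so the constraints telescope to $\sigma_{\delta_k}|\overline{x}_{\delta_k}|=\overline{x}_{\delta_k}$.

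Finally, I would verify that the two objective values coincide. By construction, $\pi_{\delta_k} = \sigma_{\delta_k}\bigl(f(S_1^k,S_2^k) - f(S_1^{k-1},S_2^{k-1})\bigr)$, so
\[
\pi^\top \overline{x} = \sum_{k=1}^n \sigma_{\delta_k}\overline{x}_{\delta_k}\bigl(f(S_1^k,S_2^k) - f(S_1^{k-1},S_2^{k-1})\bigr) = \sum_{k=1}^n |\overline{x}_{\delta_k}|\bigl(f(S_1^k,S_2^k) - f(S_1^{k-1},S_2^{k-1})\bigr),
\]
which rearranges by Abel summation to $\sum_{k=1}^n y_{S_1^k,S_2^k} f(S_1^k,S_2^k)$, matching the dual objective. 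Primal feasibility, dual feasibility, and equality of objectives together yield optimality of $\pi$ by LP strong duality, completing the proof.
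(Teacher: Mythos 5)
The paper offers no proof of this proposition at all---it is imported verbatim from \citet{bouchet1987greedy}---so there is no internal argument to compare yours against; I can only assess your attempt on its own terms. Your LP-duality template is the standard (Edmonds-style) route, and two of its three legs are correct and complete as written: the dual vector $y$ supported on the greedy chain is nonnegative by the sorting, satisfies the equality constraints by the telescoping computation you give, and your Abel-summation identity showing $\pi^\top \overline{x} = \sum_{k} y_{S_1^k,S_2^k} f(S_1^k,S_2^k)$ is correct. (A minor point: once you have feasible primal and dual solutions with equal objectives, weak duality already certifies optimality; strong duality is not needed.)

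The gap is primal feasibility, which you correctly identify as the main technical obstacle but then only gesture at. ``Induction on the number of positions where $(T_1,T_2)$ disagrees with the chain'' is not a well-defined induction measure---a general biset $(T_1,T_2)$ need not be comparable to any member of the chain---and the claim that the bisubmodular inequality ``telescopes cleanly after each local exchange'' conceals exactly the hard part: when you apply the bisubmodular inequality to $(T_1,T_2)$ and a chain element, the reduced union $\left((X_1\cup Y_1)\setminus(X_2\cup Y_2),\,(X_2\cup Y_2)\setminus(X_1\cup Y_1)\right)$ \emph{cancels} indices that the two bisets assign with opposite signs, so the resulting bisets do not simply move one step closer to the chain and the induction as stated does not close. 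As written the proof is therefore incomplete. The most economical repair within this paper's framework is to not re-derive feasibility at all: the paper already cites \citet{ando1996structures} for the fact that the signed greedy procedure (Algorithm \ref{signedgreedy}) outputs extreme points of $\mathcal{P}_f$, and Algorithm \ref{alg:sepa} is exactly signed greedy run with your $\delta$ and $\sigma$; invoking that cited result gives $\pi\in\mathcal{P}_f$ immediately, and your dual certificate then finishes the optimality proof.
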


\begin{algorithm}[H]
 \caption{Generalized Greedy \cite{bouchet1987greedy}}
\label{alg:sepa}
\SetAlgoLined
\textbf{Input}  $\overline{x}\in \mathbb{R}^n$\;
sort entries in $\overline{x}$ such that $|\overline{x}_{\delta_1}| \geq |\overline{x}_{\delta_2}| \geq \dots \geq |\overline{x}_{\delta_n}|$\;
$\pi \leftarrow \mathbf{0}$,
$S_1, S_2 \leftarrow \emptyset$\;
\For{$i = 1, 2, \dots, n$}{
   \If{$\overline{x}_{\delta_i}\geq 0$}{
    $\pi_{\delta_i} \leftarrow f(S_1\cup\{\delta_i\}, S_2) - f(S_1, S_2)$\;
    $S_1 \leftarrow S_1\cup \{\delta_i\}$\;
    }
    \Else{$\pi_{\delta_i} \leftarrow -  f(S_1, S_2\cup\{\delta_i\}) + f(S_1, S_2)$\;
     $S_2 \leftarrow S_2\cup \{\delta_i\}$\;}
}
   \textbf{Output} $\pi$.
\end{algorithm}

\vspace{0.3cm} \noindent Algorithm \ref{alg:sepa} first sorts the absolute values of entries in $\overline{x}$ so that they are non-increasing. Let this ordering of $N$ be $\delta$. Lines 5-11 assign $\delta_i \in N$ to the first argument of $f$ if $\overline{x}_{\delta(i)}\geq 0$ and to the second argument otherwise. In other words, we obtain a sign vector $\sigma$ based on the signs of every entry in $\overline{x}$. The output $\pi$ is an extreme point of $\mathcal{P}_f$ generated with the order $\delta$ and sign vector $\sigma$.

\section{Poly-bimatroid Inequalities and the Full Description of $\mathcal{Q}_f$}
\label{sect:convex_hull}
In this section, we propose a class of valid inequalities for $\mathcal{Q}_f$, which we refer to as the \textit{poly-bimatroid inequalities}. Such inequalities are closely related to the extreme points of a bisubmodular polyhedron described in Section \ref{sect:poly_ineq}. 
Then we establish the full description of the convex hull $\mathcal{Q}_f$. Throughout our discussion, we refer to $\boldsymbol{-1}\leq x \leq \boldsymbol{1}$ as the \textit{trivial inequalities}. 

\noindent Our proposed class of inequalities is defined as follows.
\begin{definition}
A poly-bimatroid inequality is given by $z\geq \pi^\top x$ for any $\pi\in\mathcal{P}_f$. If $\pi$ is an extreme point of $\mathcal{P}_f$, then the corresponding poly-bimatroid inequality is called an extremal poly-bimatroid inequality. 
\end{definition}

 \begin{proposition}
\label{prop:1}
An inequality of the form $z\geq \pi^\top x$ is valid for $\mathcal{Q}_f$ if and only if it is a poly-bimatroid inequality. 
\end{proposition}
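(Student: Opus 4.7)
The plan is to prove both directions directly from the definitions, exploiting the fact that the characteristic vectors $x_{S_1,S_2}$ paired with $z = f(S_1,S_2)$ generate $\mathcal{Q}_f$, and that the defining inequalities of $\mathcal{P}_f$ are indexed by exactly these pairs.

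For the ``if'' direction, I would start with an arbitrary $\pi\in\mathcal{P}_f$ and verify that $z \geq \pi^\top x$ holds throughout $\mathcal{Q}_f$. Because $\mathcal{Q}_f$ is defined as a convex hull and the inequality is linear, it suffices to check it on the generating set $\{(x_{S_1,S_2},z) : (S_1,S_2)\in 3^N,\ z \geq f(S_1,S_2)\}$. For such a point, $\pi^\top x_{S_1,S_2} = \sum_{i\in S_1}\pi_i - \sum_{j\in S_2}\pi_j \leq f(S_1,S_2) \leq z$, where the first inequality is precisely the defining constraint of $\mathcal{P}_f$ corresponding to the pair $(S_1,S_2)$.

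For the ``only if'' direction, I would take an inequality $z \geq \pi^\top x$ that is valid for $\mathcal{Q}_f$ and show $\pi \in \mathcal{P}_f$ by checking each defining inequality of $\mathcal{P}_f$ one at a time. Fix an arbitrary pair $(S_1,S_2)\in 3^N$. The point $(x_{S_1,S_2}, f(S_1,S_2))$ lies in $\mathcal{Q}_f$ since it satisfies $f(x) \leq z$ with equality and therefore belongs to the set whose convex hull defines $\mathcal{Q}_f$. Applying the assumed valid inequality at this point yields $f(S_1,S_2) \geq \pi^\top x_{S_1,S_2} = \sum_{i\in S_1}\pi_i - \sum_{j\in S_2}\pi_j$, which is exactly the defining inequality of $\mathcal{P}_f$ indexed by $(S_1,S_2)$. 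Since $(S_1,S_2)$ was arbitrary, $\pi\in\mathcal{P}_f$.

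The proof is essentially a bookkeeping exercise once one observes the tight correspondence between the generating points of $\mathcal{Q}_f$ and the defining inequalities of $\mathcal{P}_f$; there is no genuine obstacle. The only point to be slightly careful about is ensuring that extending beyond the graph of $f$ (i.e., allowing $z > f(x)$) does not create new obstructions in the ``if'' direction, which it clearly does not since increasing $z$ only relaxes $z \geq \pi^\top x$. The real substance of this paper, namely that the poly-bimatroid inequalities together with the trivial bounds give a \emph{complete} description of $\mathcal{Q}_f$, will require a separate argument beyond this proposition.
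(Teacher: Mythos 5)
Your proof is correct and follows essentially the same route as the paper's: verify the inequality on the generating points of $\mathcal{Q}_f$ for the ``if'' direction, and evaluate the valid inequality at the points $(x_{S_1,S_2}, f(S_1,S_2))$ to recover the defining constraints of $\mathcal{P}_f$ for the ``only if'' direction. Your explicit remark that linearity lets one check only the generating set is a minor clarification the paper leaves implicit, but the substance is identical.
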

\begin{proof}
\noindent If $\pi \in\mathcal{P}_f$, then for any $x \in \{\pm 1, 0\}^n$, $\pi^\top x = \sum_{i\in S_1^x} \pi_i - \sum_{j\in S_2^x}\pi_j \leq f(S_1^x, S_2^x) = f(x) \leq z$. Conversely, suppose that  $\pi^\top x \leq z$ is valid for $\mathcal{Q}_f$. Notice that for any $x \in \{\pm 1, 0\}^n$, $(x, f(x))\in\mathcal{Q}_f$. Thus $\pi^\top x \leq f(x)$ for all $x \in \{\pm 1, 0\}^n$. This is equivalent to $\sum_{i\in S_1} \pi_i - \sum_{j\in S_2}\pi_j \leq f(S_1,S_2)$ for all $(S_1,S_2)\in 3^N$. 
\end{proof}

\noindent Note that this proposition is a generalization of Proposition 1 in \cite{atamturk2008polymatroids} for submodular functions  to the bisubmodular case. It follows from Proposition \ref{prop:1} that all  extremal poly-bimatroid inequalities are valid for $\mathcal{Q}_f$.

\noindent In the next two propositions, we establish some properties of the facets of $\mathcal{Q}_f$.

\begin{proposition}
\label{prop:ineq_form}
Any non-trivial facet-defining inequality $\pi^\top x \leq \alpha z + \pi_0$ for $\mathcal{Q}_f$ satisfies $\pi_0\geq 0$ and $\alpha=1$ up to scaling. 
\end{proposition}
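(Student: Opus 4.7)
The plan is to establish both claims ($\alpha > 0$ up to scaling on non-trivial facets, and $\pi_0 \geq 0$) by exploiting the ``epigraph'' structure of $\mathcal{Q}_f$ together with the observation that the projection of $\mathcal{Q}_f$ onto $x$-space is the cube $[-1,1]^n$. First I would verify that $\mathcal{Q}_f \subseteq \mathbb{R}^{n+1}$ is full-dimensional: since $f(\emptyset,\emptyset)=0$, both $(\mathbf{0},0)$ and $(\mathbf{0},1)$ lie in $\mathcal{Q}_f$, and together with points of the form $(\pm e_i, M)$ for each $i \in N$ and $M$ sufficiently large, they furnish $n+2$ affinely independent elements of $\mathcal{Q}_f$. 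Hence every facet of $\mathcal{Q}_f$ has affine dimension exactly $n$, and a facet-defining inequality is unique up to positive scaling.

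Next I would rule out $\alpha < 0$ by an unboundedness argument: $\mathcal{Q}_f$ is closed under replacing $z$ by any $z' \geq z$, so $(\mathbf{0},t) \in \mathcal{Q}_f$ for every $t \geq 0$, and validity of $\pi^\top x \leq \alpha z + \pi_0$ at these points forces $\alpha t + \pi_0 \geq 0$ for all $t \geq 0$, which yields $\alpha \geq 0$. To exclude $\alpha = 0$ on a non-trivial facet, I would project onto $x$-space: the projection of $\mathcal{Q}_f$ is $\text{conv}\bigl(\{\pm 1, 0\}^n\bigr) = [-1,1]^n$, and if $\alpha = 0$, the face $F = \{(x,z) \in \mathcal{Q}_f : \pi^\top x = \pi_0\}$ is unbounded upward in $z$, so it has the cylinder form $F = \{(x,z) : x \in F_x,\ z \geq g(x)\}$ over some face $F_x$ of $[-1,1]^n$. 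Consequently $\dim F = \dim F_x + 1$, and the requirement $\dim F = n$ forces $F_x$ to be a facet of the cube, i.e., $x_i = 1$ or $x_i = -1$ for some $i$. These are precisely the trivial inequalities, which are excluded by hypothesis, so $\alpha > 0$ and rescaling yields $\alpha = 1$. With $\alpha = 1$, substituting the feasible point $(\mathbf{0},0) \in \mathcal{Q}_f$ into $\pi^\top x \leq z + \pi_0$ then gives $0 \leq \pi_0$.

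The step I expect to require the most care is the cylinder/projection argument that excludes $\alpha = 0$: one must justify that a face with $\alpha = 0$ is genuinely translation-invariant along the $z$-axis, so that $\dim F = \dim F_x + 1$ and a facet of $\mathcal{Q}_f$ projects to a facet of $[-1,1]^n$. This relies on the upward-closure of the epigraph in $z$ that is built into the definition of $\mathcal{Q}_f$. The remaining parts are short and reduce to evaluating the inequality at a handful of explicit feasible points.
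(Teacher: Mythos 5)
Your proof is correct, and its skeleton matches the paper's: you rule out $\alpha<0$ using the vertical ray at the origin, and you get $\pi_0\geq 0$ by evaluating at $(\mathbf{0},0)$, exactly as the paper does. The one step you handle genuinely differently is excluding $\alpha=0$ on a non-trivial facet. The paper argues by domination: any valid inequality $\pi^\top x\leq\pi_0$ must have $\pi_0\geq\max_{x\in[-1,1]^n}\pi^\top x=\sum_i|\pi_i|$ (the paper's text says ``min,'' which is a typo), and the tightest such inequality is a nonnegative combination of the trivial bounds, so it cannot define a new facet. You instead argue geometrically: since the inequality does not involve $z$ and $\mathcal{Q}_f$ is upward-closed in $z$, the face is a cylinder over $F_x=[-1,1]^n\cap\{\pi^\top x=\pi_0\}$, so $\dim F=\dim F_x+1$ forces $F_x$ to be a facet of the cube, whence $\pi^\top x\leq\pi_0$ is a positive multiple of $x_i\leq 1$ or $-x_i\leq 1$. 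Your route costs a bit more machinery --- you must first establish $\dim\mathcal{Q}_f=n+1$ (which the paper only proves later, inside Proposition~\ref{prop:iff}) and justify the cylinder structure and the identity $\mathrm{proj}_x(\mathcal{Q}_f)=[-1,1]^n$ --- but it buys a cleaner conclusion (the facet literally coincides with a trivial facet, rather than merely being implied by trivial inequalities) and it is the kind of argument that generalizes to other epigraph polyhedra whose $x$-projection is a known polytope. Both are complete proofs; no gap in yours.
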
 
\begin{proof}
\noindent Since the ray $(\mathbf{0}, 1)$ is in $\mathcal{Q}_f$, $\alpha$ has to be non-negative. When $\alpha = 0$, for $\pi^\top x \leq \pi_0$ to be valid, $\pi_0 \geq \min_{x\in\{\pm 1\}^N} \pi^\top x$. The tightest inequality among such inequalities is when $\pi_0 = \min_{x\in\{\pm 1\}^N} \pi^\top x$, which is implied by the trivial inequalities.  Thus $\alpha >0$ and we can scale the inequality so that $\alpha = 1$. Given the assumption that $f(\emptyset, \emptyset) = 0$, $(\mathbf{0},0)\in\mathcal{Q}_f$, we have $\pi_0 \geq 0$. 
\end{proof}

\begin{proposition}
\label{prop:iff}
Let $f$ be a bisubmodular function with $f(\emptyset, \emptyset) = 0$. The inequalities of the type $\pi^\top x \leq z$ are facet-defining for $\mathcal{Q}_f$ if and only if they are extremal poly-bimatroid inequalities. 
\end{proposition}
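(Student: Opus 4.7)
The plan is to prove both directions separately, leveraging that $\mathcal{Q}_f$ is full-dimensional in $\mathbb{R}^{n+1}$ (it contains $(\mathbf{0},0)$, the points $(\pm e_i, f(\pm e_i))$ for $i\in N$, and the recession direction $(\mathbf{0},1)$), so a facet is exactly an $n$-dimensional face. Validity of every poly-bimatroid inequality is already handled by Proposition \ref{prop:1}, so the content of both directions concerns dimension.

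For the direction ``extremal $\Rightarrow$ facet-defining,'' I would start with an extreme point $\pi$ of $\mathcal{P}_f$ produced by Algorithm \ref{signedgreedy} for some ordering $\delta=(\delta_1,\ldots,\delta_n)$ and signs $\sigma\in\{\pm 1\}^N$. Let $(S_1^{(i)},S_2^{(i)})$ be the pair after the $i$-th iteration, with $(S_1^{(0)},S_2^{(0)})=(\emptyset,\emptyset)$, and let $x^{(i)}$ be its ternary characteristic vector. A direct telescoping of the marginal-difference updates in lines 5 and 9 of Algorithm \ref{signedgreedy} yields
$\pi^\top x^{(i)} = \sum_{k=1}^{i}\sigma_{\delta_k}\pi_{\delta_k} = f(S_1^{(i)},S_2^{(i)})$
for every $i=0,1,\ldots,n$, so the $n+1$ points $(x^{(i)}, f(S_1^{(i)},S_2^{(i)}))$ all lie in $\mathcal{Q}_f$ and are tight for $\pi^\top x\le z$. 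Since the consecutive differences $x^{(i)}-x^{(i-1)} = \sigma_{\delta_i} e_{\delta_i}$ are nonzero multiples of distinct standard basis vectors, these $n+1$ points are affinely independent in $\mathbb{R}^{n+1}$, which together with validity shows that $\pi^\top x\le z$ defines a facet.

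For the direction ``facet-defining $\Rightarrow$ extremal,'' suppose $\pi^\top x\le z$ is facet-defining. By Proposition \ref{prop:1}, $\pi\in\mathcal{P}_f$. Assume for contradiction that $\pi$ is not an extreme point; since $\mathcal{P}_f$ is bounded, write $\pi=\lambda\pi^1+(1-\lambda)\pi^2$ with distinct $\pi^1,\pi^2\in\mathcal{P}_f$ and $\lambda\in(0,1)$. For every $(x,z)$ on the facet, $z=\pi^\top x = \lambda (\pi^1)^\top x + (1-\lambda)(\pi^2)^\top x$, while $(\pi^i)^\top x\le z$ is valid, forcing $(\pi^1)^\top x = (\pi^2)^\top x = z$. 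Subtracting gives $(\pi^1-\pi^2)^\top x = 0$ on the facet. Since the facet has dimension $n$ and lies on the graph $z=\pi^\top x$, the projection $(x,z)\mapsto x$ is a bijection from it onto an $n$-dimensional subset of $\mathbb{R}^n$, which cannot lie in the hyperplane $\{x:(\pi^1-\pi^2)^\top x=0\}$ unless $\pi^1=\pi^2$, a contradiction.

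The main obstacle is the telescoping identity in the first direction, since it requires carefully matching the marginal-difference definition of $\pi$ from Algorithm \ref{signedgreedy} against the signs carried by $x^{(i)}$; once that identity is established, affine independence follows from the triangular structure of the signed greedy trajectory, and the second direction is a standard convex-combination argument backed by a dimension count of the projected facet.
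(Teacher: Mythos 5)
Your proof is correct and follows essentially the same route as the paper: the forward direction uses exactly the paper's construction (the $n+1$ affinely independent tight points given by the partial signed-greedy trajectory $x^{(0)},\dots,x^{(n)}$, whose affine independence follows from the triangular structure), and your telescoping identity $\pi^\top x^{(i)} = f(S_1^{(i)},S_2^{(i)})$ is the fact the paper asserts implicitly when it says those points lie on the face. The converse direction differs only cosmetically --- you write a non-extreme $\pi$ as a combination of two points and use a dimension count, whereas the paper decomposes $\pi$ over all extreme points and notes the resulting implication contradicts facet-ness --- but both are the same standard argument that a non-extreme point of $\mathcal{P}_f$ yields a dominated inequality.
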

\begin{proof}
\noindent First we show that the extremal poly-bimatroid inequalities are facet-defining. Let $e_i$ be a vector of dimension $n$ with all 0s but 1 in the $i$th entry. Consider the following $n+2$ points in $\mathcal{Q}_f$: $\mathbf{0}$, $\{(e_i^\top, f(\{i\},\emptyset))\}_{i=1}^n$ and $(-e_1^\top, f(\emptyset, \{1\})+\epsilon)$ where $\epsilon >0$ such that $f(\emptyset, \{1\}))+\epsilon \neq -f(\{1\},\emptyset)$. The non-zero points are linearly independent. Therefore, dim$(\mathcal{Q}_f)=n+1$. For each extremal poly-bimatroid inequality to be facet defining, we need $n+1$ affine independent points on its face. As mentioned in the discussion of Algorithm \ref{signedgreedy}, each extreme point $\pi\in\mathcal{P}_f$ has a consistent pair of ordering $\delta$ and sign vector $\sigma$. For $i=1,2,\dots, n$, let $\sigma^i$ be the same as $\sigma$ except 0s in the $\delta_i, \delta_{i+1},\dots,\delta_{n}$ entries. Note that $(\sigma^1,f(\sigma^1)),(\sigma^2, f(\sigma^2)),\dots,(\sigma^n, f(\sigma^n))$ along with $(\sigma, f(\sigma))$ lie on the face of $\pi^\top x \leq z$ and are affine independent. Therefore, the extremal poly-bimatroid inequalities are facet defining for $\mathcal{Q}_f$.  

\noindent Conversely, suppose $\bar{\pi}^\top x \leq z$ is facet-defining for $\mathcal{Q}_f$. Let $\Pi$ denote the set of extreme points  of $\mathcal{P}_f$ given by $\{\pi^1,\dots,\pi^{|\Pi|}\}$. By contradiction, we assume that $\bar{\pi}\notin \Pi$. If $\bar{\pi}\notin\mathcal{P}_f$ then there exists a disjoint pair of subsets $(S_1, S_2)$ such that $\bar{\pi}^\top x_{S_1,S_2} > f(S_1, S_2) \geq z$, making $\bar{\pi}^\top x \leq z$ invalid. On the other hand, if $\bar{\pi}\in\mathcal{P}_f \backslash \Pi$, then $\bar{\pi}= \sum_{i = 1}^{|\Pi|} \lambda_i \pi^i$, where $0\leq \lambda_i < 1$ and $\sum_{i=1}^{|\Pi|} \lambda_i = 1$. We have just shown that $\lambda_i(\pi^i)^\top x \leq \lambda_i z$ for $i = 1, 2, \dots, |\Pi|$ are facet-defining, and these inequalities imply $\bar{\pi}^\top x \leq z$. 
\end{proof}

\noindent Before we give our main result, we prove a useful lemma. 

\begin{lemma}\label{lem:f'}
Let  $f:3^N\rightarrow \mathbb{R}$ be a bisubmodular function with $f(\emptyset,\emptyset) = 0$. Then for any $\pi_0 > 0$, the function $f':3^N\rightarrow \mathbb{R}$ defined as 
$f'(S_1, S_2) = f(S_1, S_2) + \pi_0$ if $(S_1,S_2) \ne (\emptyset, \emptyset)$ and $f'(\emptyset, \emptyset) =0$ is bisubmodular. 
\end{lemma}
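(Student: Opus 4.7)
The plan is to verify the bisubmodular inequality for $f'$ directly, by decomposing $f'$ into $f$ plus a constant ``correction'' on the nonempty pairs. Since $f(\emptyset,\emptyset)=0$, I would write
\[
f'(S_1,S_2) \;=\; f(S_1,S_2) \;+\; \pi_0\,\chi(S_1,S_2), \qquad \chi(S_1,S_2):=\begin{cases}1 & (S_1,S_2)\ne(\emptyset,\emptyset),\\ 0 & (S_1,S_2)=(\emptyset,\emptyset).\end{cases}
\]
The bisubmodular inequality for $f'$ then splits into a sum of two inequalities: the bisubmodular inequality for $f$ (which holds by hypothesis) and $\pi_0$ times the corresponding inequality for $\chi$. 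Since $\pi_0>0$, the whole task reduces to proving that $\chi$ itself satisfies the bisubmodular inequality.

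Concretely, I would fix arbitrary $(X_1,X_2),(Y_1,Y_2)\in 3^N$ and abbreviate $A=X_1\cap Y_1$, $B=X_2\cap Y_2$, $C=(X_1\cup Y_1)\setminus(X_2\cup Y_2)$, $D=(X_2\cup Y_2)\setminus(X_1\cup Y_1)$, and then verify
\[
\chi(X_1,X_2)+\chi(Y_1,Y_2)\;\ge\;\chi(A,B)+\chi(C,D)
\]
by a short case analysis. If both $(X_1,X_2)$ and $(Y_1,Y_2)$ are nonempty, the left-hand side equals $2$, which already dominates the right-hand side. If one of them, say $(X_1,X_2)$, equals $(\emptyset,\emptyset)$, then $A=B=\emptyset$ and $(C,D)=(Y_1,Y_2)$, so both sides collapse to $\chi(Y_1,Y_2)$ and equality holds. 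The case $(Y_1,Y_2)=(\emptyset,\emptyset)$ is symmetric.

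Combining the two inequalities gives bisubmodularity of $f'$. The only place requiring any care — and what I would flag as the main subtlety — is the empty case: a priori one might worry that $(A,B)$ or $(C,D)$ could be empty while both inputs are nonempty, so that RHS ``picks up'' fewer $\pi_0$'s than LHS; but this direction only helps us, and the potentially troublesome direction (LHS losing a $\pi_0$) is exactly the case in which the combinatorics forces matching losses on the RHS.
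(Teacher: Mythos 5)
Your proposal is correct and is essentially the paper's own argument: both proofs reduce to the same case analysis on which of the four bisets are $(\emptyset,\emptyset)$, with the key observations that an empty output biset only weakens the right-hand side (helped by $\pi_0>0$) and that an empty input biset forces the output combinatorics to match exactly. Your packaging of this as $f'=f+\pi_0\chi$ with $\chi$ verified to be bisubmodular is a slightly more modular presentation of the same content, not a different route.
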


\begin{proof}
Let us consider any $(\emptyset,\emptyset) \neq (X_1, X_2)\in 3^N, (\emptyset, \emptyset) \neq (Y_1, Y_2)\in 3^N$. Notice that 
\begin{align*}
f'(X_1, X_2) + f'(Y_1, Y_2) & = f(X_1, X_2) + \pi_0 + f(Y_1, Y_2)+ \pi_0 \\
& \geq f(X_1\cap Y_1, X_2 \cap Y_2) + \pi_0 + f((X_1\cup Y_1 )\backslash (X_2 \cup Y_2), (X_2 \cup Y_2)\backslash (X_1\cup Y_1 ))+ \pi_0 \\
& \geq f'(X_1\cap Y_1, X_2 \cap Y_2) + f'((X_1\cup Y_1 )\backslash (X_2 \cup Y_2), (X_2 \cup Y_2)\backslash (X_1\cup Y_1 )).
\end{align*} The first inequality holds because $f$ is bisubmodular. The last inequality is not a strict equality because either $(X_1\cap Y_1, X_2 \cap Y_2)$ or $((X_1\cup Y_1 )\backslash (X_2 \cup Y_2), (X_2 \cup Y_2)\backslash (X_1\cup Y_1 ))$ can be $(\emptyset, \emptyset)$ even when both $(X_1, X_2)$ and $(Y_1, Y_2)$ are not $(\emptyset, \emptyset)$. For instance, $(X_1, X_2)=(\emptyset, \{i\})$ and $(Y_1, Y_2) = (\{i\}, \emptyset)$. Since $\pi_0 >0$, the last inequality holds. Without loss of generality, suppose $(X_1, X_2)=(\emptyset,\emptyset)$ while $(Y_1, Y_2)$ is any biset in $3^N$. In this case, observe that $(\emptyset\cap Y_1, \emptyset \cap Y_2) = (\emptyset, \emptyset)$ and $((\emptyset \cup Y_1 )\backslash (\emptyset \cup Y_2), (\emptyset \cup Y_2)\backslash (\emptyset \cup Y_1 )) = (Y_1, Y_2)$. It follows that 
\[
f'(\emptyset, \emptyset) + f'(Y_1, Y_2) = f'(\emptyset\cap Y_1, \emptyset \cap Y_2) + f'((\emptyset \cup Y_1 )\backslash (\emptyset \cup Y_2), (\emptyset \cup Y_2)\backslash (\emptyset \cup Y_1 )).
\] Therefore, $f'$ is bisubmodular. 
\end{proof}

\noindent Now we are ready to show that the extremal poly-bimatroid inequalities and the trivial inequalities give a complete linear description of $\mathcal{Q}_f$.

\begin{theorem}\label{thm:conv}
Suppose $f:3^N\rightarrow \mathbb{R}$ is a bisubmodular function with $f(\emptyset,\emptyset) = 0$. The convex hull of the epigraph of $f$, $\mathcal{Q}_f$, is described completely by the extremal poly-bimatroid inequalities $\pi^\top x \leq z$, and the trivial inequalities $\boldsymbol{-1}\leq x \leq \boldsymbol{1}$. 
\end{theorem}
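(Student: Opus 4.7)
The plan is to establish the set equality by proving both containments. Validity of all extremal poly-bimatroid inequalities is Proposition~\ref{prop:1}, and the trivial inequalities clearly hold on $\mathcal{Q}_f$, so $\mathcal{Q}_f$ is contained in the polyhedron described by these inequalities. For the reverse containment, my strategy is to show that every facet-defining inequality of $\mathcal{Q}_f$ is either a trivial inequality or implied by the extremal poly-bimatroid inequalities together with the trivial inequalities. Proposition~\ref{prop:ineq_form} reduces every non-trivial facet, after scaling, to the form $\pi^\top x \leq z + \pi_0$ with $\pi_0 \geq 0$, which splits the remaining work into two cases.

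The case $\pi_0=0$ is immediate from Proposition~\ref{prop:iff}: any such facet is an extremal poly-bimatroid inequality. The substantive case is $\pi_0>0$, which I would handle by invoking Lemma~\ref{lem:f'} to shift the function. Define $f'(S_1,S_2)=f(S_1,S_2)+\pi_0$ on non-empty bisets (and $f'(\emptyset,\emptyset)=0$); by the lemma, $f'$ is bisubmodular. Evaluating the validity of $\pi^\top x \leq z+\pi_0$ at each point $(x_{S_1,S_2},f(S_1,S_2))\in\mathcal{Q}_f$ shows $\pi \in \mathcal{P}_{f'}$. Since $\mathcal{P}_{f'}$ is bounded (as $f'$ is defined on all of $3^N$), I would then write $\pi = \sum_i \lambda_i\, \eta^i$ as a convex combination of extreme points $\eta^i$ of $\mathcal{P}_{f'}$, each generated by Algorithm~\ref{signedgreedy} from some permutation-sign pair $(\delta^i,\sigma^i)$.

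The crux of the argument is to relate each $\eta^i$ to the extreme point $\pi^i$ of $\mathcal{P}_f$ produced by the same $(\delta^i,\sigma^i)$. Tracing through the signed greedy algorithm shows that the $\pi_0$ shift only affects the first marginal: for every step after the first, both bisets involved in the marginal are non-empty and the $\pi_0$ shifts cancel, whereas at the first step one biset is $(\emptyset,\emptyset)$ and the other receives the $\pi_0$ shift. This yields the identity $\eta^i = \pi^i + \sigma^i_{\delta^i_1}\,\pi_0\, e_{\delta^i_1}$. Adding the extremal poly-bimatroid inequality $(\pi^i)^\top x \leq z$ for $\mathcal{P}_f$ to $\pi_0$ times the trivial inequality $\sigma^i_{\delta^i_1} x_{\delta^i_1} \leq 1$ therefore produces $(\eta^i)^\top x \leq z+\pi_0$, and the convex combination with weights $\lambda_i$ recovers $\pi^\top x \leq z+\pi_0$ as a non-negative combination of extremal poly-bimatroid and trivial inequalities, showing the purported facet is implied. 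I expect the main obstacle to be carefully verifying the extreme-point correspondence between $\mathcal{P}_f$ and $\mathcal{P}_{f'}$ via the signed greedy algorithm; once that identity is pinned down, assembling the convex combination is routine bookkeeping.
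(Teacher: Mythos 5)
Your proposal is correct and follows essentially the same route as the paper: both hinge on Proposition~\ref{prop:ineq_form}, Lemma~\ref{lem:f'}, and the signed-greedy identity $\eta = \pi^* + \sigma_{\delta_1}\pi_0 e_{\delta_1}$ relating extreme points of $\mathcal{P}_{f'}$ to those of $\mathcal{P}_f$, concluding that a purported facet with $\pi_0>0$ is dominated by an extremal poly-bimatroid inequality plus $\pi_0$ times a trivial inequality. The one genuine (and welcome) difference is in the middle step: the paper argues that $\pi^\top x \leq z$ remains \emph{facet-defining} for $\mathcal{Q}_{f'}$ and hence, via Proposition~\ref{prop:iff}, that $\pi$ is itself an extreme point of $\mathcal{P}_{f'}$, whereas you only establish the weaker fact $\pi \in \mathcal{P}_{f'}$ (immediate from validity) and then decompose $\pi$ as a convex combination of extreme points of the bounded polyhedron $\mathcal{P}_{f'}$, applying the shift identity to each summand. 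Your version avoids the somewhat informal facet-transfer argument in the paper and is, if anything, slightly cleaner; the paper's version avoids the convex-combination bookkeeping by pinning $\pi$ down as a single extreme point. Either way the conclusion is the same, and your verification of the extreme-point correspondence (the $\pi_0$ shift surviving only in the first greedy step) is exactly the calculation the paper relies on.
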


\begin{proof}
\noindent By Proposition \ref{prop:ineq_form}, we know that the non-trivial facets of $\mathcal{Q}_f$ assume the form $\pi^\top x \leq z + \pi_0$ where $\pi_0 \geq 0$. For contradiction, suppose $\pi_0 > 0$. We observe that $\pi\notin \mathcal{P}_f$ because otherwise $\pi^\top x\leq z$ is valid and dominates $\pi^\top x\leq z +\pi_0$. Since $\pi^\top x \leq z + \pi_0$ is facet-defining for $\mathcal{Q}_f$, {$\pi^\top x \leq z$} is facet-defining for $\mathcal{Q}_{f'}$ where $f'(S_1, S_2) = f(S_1, S_2) + \pi_0$ if $(S_1,S_2) \ne (\emptyset, \emptyset)$ and $f'(\emptyset, \emptyset) =0$. {To see this, we observe that $\pi^\top x\leq z +\pi_0$ is not binding at $(x_{\emptyset,\emptyset}, f(\emptyset, \emptyset))$, and all the other points in the epigraph of $f$ are exactly the points in the epigraph of $f'$ shifted by $\pi_0$ in $z$.} From Lemma \ref{lem:f'}, the function $f'$ is bisubmodular.  By Proposition \ref{prop:iff}, $\pi$ is an extreme point in $\mathcal{P}_{f'}$, and it has a consistent order $\delta$ and a sign vector $\sigma$. With the signed greedy algorithm (Algorithm \ref{signedgreedy}), we can generate an extreme point $\pi^*\in\mathcal{Q}_f$ from the associated order $\delta$ and sign vector $\sigma$. Furthermore, $\pi = \pi^* + \pi_0{\sigma_{\delta(1)}} e_{\delta(1)}$. Thus $\pi^\top x \leq z + \pi_0$ is dominated by $\pi^{*\top} x \leq z$ and $e_{\delta(1)}^\top x \leq 1$ or $e_{\delta(1)}^\top x \geq -1$, contradicting the assumption that this inequality is a facet for $\mathcal{Q}_f$. Hence $\pi_0 = 0$. We conclude that the only facets for $\mathcal{Q}_f$ are the extremal poly-bimatroid inequalities and the trivial inequalities. 
\end{proof}

\noindent Propositions \ref{prop:ineq_form} and \ref{prop:iff}, and Theorem \ref{thm:conv} are generalizations of the results of \citet{atamturk2019submodular}  for submodular functions to the bisubmodular case.

\section{A Cutting Plane Algorithm for Bisubmodular Minimization}
\label{sect:cut_plane}
In this section, we consider constrained bisubmodular minimization problems in the form
\begin{subequations}
\label{eq:general}
\begin{alignat}{2}
\min & \quad f(S_1,S_2) \\
\text{s.t.} & \quad (S_1,S_2) \in \mathcal{S},
\end{alignat}
\end{subequations}
where $\mathcal{S}\subseteq 3^N$ captures additional restrictions such as cardinality or budget constraints. We can rewrite such problems as
\begin{subequations}
\label{eq:general_problem}
\begin{alignat}{2}
\min & \quad z  \\
\text{s.t.} & \quad (x,z)\in \mathcal{C} \label{subeq:genpoly}\\
& \quad x \in \mathcal{X}.
\end{alignat}
\end{subequations} The {polyhedral set $\mathcal{C}$ is defined by} the extremal poly-bimatroid inequalities which provide a piecewise linear representation of the objective function value. The set $\mathcal{X}$ encodes the constraints on the set  incidence vector $x$ defined by $\mathcal S$ in addition to the ternary restrictions.  

\noindent Note that there are exponentially many extremal poly-bimatroid inequalities in \eqref{subeq:genpoly}. Therefore, we propose Algorithm \ref{alg:general} to address Problem (\ref{eq:general_problem}). In this algorithm, we start with {a relaxed  set $\mathcal{C}$} that contains a subset of the extremal poly-bimatroid inequalities. 
Let  the optimality gap be measured by $(\text{UB}-\text{LB})/\text{UB}$, where UB is the upper bound  and LB is  the lower bound on the objective. 
While the optimality gap is greater than a given tolerance $\epsilon$, we solve this relaxed version of Problem (\ref{eq:general_problem}) to obtain $(\overline{x}, \overline{z})$. The current solution $\overline{z}$ is a lower bound for the optimal objective, and $f(\overline{x})$ is an upper bound. If $\overline{z}$ underestimates $f(\overline{x})$, then we add a most violated extremal poly-bimatroid inequality to $\mathcal{C}$ by solving a separation problem in the form of \eqref{eq:linear_bisub_max}  with the Generalized Greedy Algorithm provided in Algorithm \ref{alg:sepa}. We solve the updated relaxed problem in the next iteration and repeat.

 \vspace{0.3cm} 

\begin{algorithm}[H]
 \caption{Delayed Constraint Generation}
\label{alg:general}
\SetAlgoLined
\textbf{Input} initial $\mathcal{C}$, $\text{LB} = -\infty$, $\text{UB} = \infty$\;
 \While{ $(\text{UB}-\text{LB})/\text{UB}>\epsilon$ }{
  Solve Problem (\ref{eq:general_problem}) to get $(\overline{x}, \overline{z})$\;
  $\text{LB}\leftarrow\overline{z}$\;
     compute $f(\overline{x})$\;
   \If{$\overline{z}<f(\overline{x})$}{
    Use Algorithm  \ref{alg:sepa} to obtain a violated extremal poly-bimatroid inequality $z\geq \pi^\top x$. Add this cut to $\mathcal{C}$\;
     }
     \If{$\text{UB}>f(\overline{x})$}{
     $\text{UB}\leftarrow f(\overline{x})$\;
    Update the incumbent solution to $\overline{x}$ \;
     }
  }
  \textbf{Output} $\overline{z}$, $\overline{x}$. 
\end{algorithm}

\section{An Application: Robust Coupled Sensor Location}
\label{sect:computation}
Bisubmodularity has a wide range of applications including bicooreperative games \cite{bilbao2008survey}, coupled sensor placement, and coupled feature selection problems \cite{singh2012bisubmodular}. Even though most of these applications involve bisubmodular maximization, bisubmodular minimization appears as a subproblem in the adversarial or robust settings, as well as in the contexts such as obnoxious facility siting \cite{church1978locating,plastria1999undesirable}. \revised{In this section, we show an application of bisubmodular minimization in the context of a \emph{robust} variant of the coupled sensor placement problem  \cite{singh2012bisubmodular, ohsaka2015monotone}, which accounts for the worst-case outcomes due to the uncertainties in sensor deployment.  }

\noindent We first describe the coupled sensor placement problem. Let $N$ be a set of $n$ deployment locations. There are two types of sensors for different measurements, such as temperature and humidity. Every location can hold at most one sensor. Each biset $(S_1, S_2)\in 3^N$ represents the deployment of type-1 sensors at locations in $S_1$ and type-2 sensors at $S_2$. The effectiveness of a coupled sensor placement plan can be measured by entropy \cite{ohsaka2015monotone}. Suppose $X$ is a discrete random variable, and $\mathcal{X}$ is the set of all possible outcomes. Then the entropy of $X$ is defined by
\[H(X) = -\sum_{x\in \mathcal{X}} \mathbb{P}(X = x)\log \mathbb{P}(X = x). \]
Intuitively, the random variables with unpredictable outcomes have high entropy values. For example,  a fair coin has higher entropy than a biased coin because we have less information about the outcome of the fair coin. Let $X_{S_1,S_2}$ be a discrete random variable representing the observations collected from the sensor deployment $(S_1, S_2)$. The entropy of $X_{S_1,S_2}$ is given by $H(X_{S_1,S_2}) = -\sum_{x\in \mathcal{X}_{S_1,S_2}} \mathbb{P}(X_{S_1,S_2} = x)\log \mathbb{P}(X_{S_1,S_2} = x),$ where $\mathcal{X}_{S_1,S_2}$ is the set of observations at $(S_1, S_2)$. The objective function is $f(S_1, S_2) =  H(X_{S_1,S_2})$ for all $(S_1, S_2)\in 3^N$. It is desirable to place sensors at the locations where we are least sure about what we may observe. In other words, we would like to deploy sensors at the locations with high entropy to capture the most missing information about the entire environment. Therefore, the optimal sensor placement is usually found by maximizing function $f$. \citet{ohsaka2015monotone} show that the function $f$ is bisubmodular by verifying both Ando Conditions provided in Section \ref{sect:intro}.

\noindent Now consider a robust variant of the coupled sensor placement problem with two types of uncertainties. First, sensors of the wrong types may be installed at certain locations. Second, some sensors may fail due to software or hardware fault. We assume that at most $W$ sensors of the wrong type may be installed, and no fewer than $B'_1$ type-1 sensors and $B'_2$ type-2 sensors function properly. The goal is to determine a placement plan $(S_1,S_2)\in 3^N$ for $B_1$ type-1 sensors and $B_2$ type-2 sensors, such that the worst-case entropy is maximized.  To better illustrate this robust problem, we provide an example. 

\begin{example}
Suppose we plan to deploy two types of sensors over three locations. Type-1 sensors report temperatures in terms of "low" and "high". Type-2 sensors report humidities in terms of "humid" and "dry". Temperature and humidity data at these three locations is given in Table \ref{tab:ex}.  

\begin{table}[htb]
\footnotesize
\begin{center}
\caption{\footnotesize Temperature and humidity data collected at 3 sensor placement locations.}
\begin{tabular}{c|c|ccccccc}
\hline
 & location & day 1 & day 2 & day 3 & day 4 & day 5 & day 6 & day 7 \\
\hline
 temperature &  \multirow{2}{*}{1}  & low & low & low & high & high & high & low \\
humidity  & & humid & dry & dry & dry & dry & dry & humid \\ 
   \hline
   temperature &  \multirow{2}{*}{2}  & low & low& low & low & low & high & low \\
humidity  & & humid & humid & dry & dry & dry & humid & dry \\ 
   \hline
temperature &  \multirow{2}{*}{3}  & high & high & high & high & high & low & low  \\
humidity  & & humid & humid & dry & dry & humid & humid & humid \\ 
   \hline
\end{tabular}
\label{tab:ex}
\end{center}
\end{table}

\noindent Consider the placement plan $(\{1,3\},\{2\})$, by which we place temperature sensors at locations 1 and 3, and a humidity sensor at location 2. The probability of each outcome is
\begin{align*}
& p([\textrm{low,humid, high}])  = p([ \textrm{high, dry, high}]) = 2/7, \\
& p([ low, dry, high])  = p([ high, humid, low]) = p([ \textrm{low, dry, low}]) = 1/7. 
\end{align*} Then the entropy of this plan  is $ H(\{1,3\},\{2\})= - [2\times (2/7) \log_2 (2/7) + 3\times (1/7)\log_2 (1/7) ]  = 2.24.$

\noindent Suppose the parameters are set to $B_1 = 2, B_2 = 1, B'_1 = 1, B'_2 = 1$ and $W = 2$. Then the numbers of two types of sensors are 2 and 1 respectively. At most two sensors of the wrong types could be installed, and at least one installed sensor of each type stays intact. In this case, $(\{1,3\}, \{2\})$ is an optimal robust deployment plan. Its worst-case entropy is 1.38, when the temperature sensor placed at location 1 is broken and the humidity and temperature sensors at locations 2 and 3 are switched due to installation mistake. In other words, in the worst case, the working sensors are placed at $(\{2\},\{3\})$. The outcomes are [low, humid], [low, dry] and [high, humid], with probabilities $4/7$, $2/7$, and $1/7$, respectively. The entropy is $H(\{2\},\{3\})= - [(1/7) \log_2 (1/7) + (2/7)\log_2 (2/7) + (4/7)\log_2 (4/7)]  = 1.38. $
\end{example}

\noindent The robust coupled sensor placement problem can be written as a max-min optimization problem \eqref{eq:robust-entropy}.
\begingroup
\allowdisplaybreaks
\begin{subequations}
\label{eq:robust-entropy}
\begin{alignat}{2}
\max_{(S_1,S_2)\in 3^N} \min_{(T_1, T_2) \in 3^{S_1\cup S_2}} \hspace{0.2cm} & f(T_1, T_2)&& \\
\textrm{s.t.} \quad & |S_1| = B_1,  |S_2| = B_2, &&\\
& |T_1| \geq B'_1,  |T_2| \geq B'_2, && \\
& |T_1\cap S_2| + |T_2\cap S_1| \leq W.  && 
\end{alignat}
\end{subequations}
\endgroup
Here, $(S_1,S_2)$ represents the original deployment plan, and $(T_1, T_2)$ represents the locations of the deployed and functioning sensors. The inner problem is a constrained bisubmodular minimization problem. It contains multiple cardinality constraints and has a highly non-linear objective. Therefore, this inner problem does not have an equivalent compact mixed integer linear formulation that can be input to a solver, and we cannot apply common approaches such as converting the max-min problem into a maximization problem by duality. \revised{Approaches such as the integer L-shaped method \cite{laporte1993integer} can be used to solve this robust optimization problem. Such an algorithm iteratively determines---until an optimality condition is met---candidate coupled sensor locations,  $S_1$ and $S_2$, from the outer maximization problem. Given $S_1,S_2$, the inner minimization subproblem is formulated as  the bisubmodular minimization subproblem \eqref{eq:DCG}, which evaluates the worst-case entropy for this set of  coupled sensor locations. 
In this paper, we focus on solving this subproblem.
}
\begingroup
\allowdisplaybreaks
\begin{subequations}
\label{eq:DCG}
\begin{alignat}{2}
\min \hspace{0.2cm} & z  \\
\textrm{s.t.}\hspace{0.2cm}  & (x ,z) \in \mathcal{C}, && \label{subeq:linearapprox} \\
& x_i = y_i^1 -y_i^2, && \quad \text{ for all } i\in N, \label{subeq:biset}\\
& y_i^1 + y_i^2 \leq 1, &&\quad \text{ for all } i \in N, \label{subeq:disjoint}\\
& \sum_{i\in N} y_i^1 \geq B'_1, && \label{subeq:card1}\\
& \sum_{i\in N} y_i^2 \geq B'_2, && \label{subeq:card2}\\
& \revised{y_i^1, y_i^2 = 0,} &&\quad \revised{\text{ for all } i\in N\backslash (S_1\cup S_2), } \label{subeq:outer}\\
& \sum_{i\in S_1}y_i^2 + \sum_{i\in S_2}y_i^1 \leq W, &&\quad \text{ for all } i \in N, \label{subeq:switch}\\
& y_i^1, y_i^2 \in \{0,1\}, &&\quad  \text{ for all } i\in N. 
\end{alignat}
\end{subequations}
\endgroup
\noindent Here, the decision variables $x$ and $z$ are defined as in Problem (\ref{eq:general_problem})\revised{, such that $x$ is a ternary vector representing the locations of the working sensors, and $z$ is the auxiliary variable capturing the entropy.} Constraint \eqref{subeq:linearapprox} is the piecewise linear representation of the objective function value using the extremal poly-bimatroid inequalities. Constraint \eqref{subeq:biset} converts the ternary characteristic vector $x$ into the difference between two binary characteristic vectors $y^1$ and $y^2$. For every $i\in N$, $y_i^1=1$ when $i$ in assigned to the first argument of $f$, and $y_i^2 =1$ otherwise. Inequality \eqref{subeq:disjoint} ensures that the two arguments of $f$ are disjoint, and  inequalities \eqref{subeq:card1}-\eqref{subeq:card2} capture the cardinality constraints for the functioning sensors of both types. Constraint \eqref{subeq:outer} incorporates the solution from the outer-level maximization problem and restricts the feasible set in the inner-level minimization problem. Lastly, inequality \eqref{subeq:switch} is the cardinality upper bound of the number of sensors that may be misplaced. 

\section{Computational Experiments}
\label{sect:exp}
We conduct computational experiments on  \revised{the minimization of the entropy function motivated by} the robust coupled sensor placement problem. We use the Intel Berkeley Research lab dataset \cite{bodik2004intel}, which contains sensor readings including temperature and humidity at 54 sensor locations in the Intel Berkeley Research lab from February 28th to April 5th in 2004. The temperature data is discretized into three equal-width bins from the lowest temperature reading to the highest. The humidity measurements are discretized similarly into two equal-width bins.  Our computational  experiments are performed using two threads of a Linux server with Intel Haswell E5-2680 processor at 2.5GHz and 128GB of RAM using Python 3.6 and Gurobi Optimizer 7.5.1 in its default settings. The time limit for each instance is set to one hour. 

\noindent We randomly select $n\in\{5,10,\revised{20}\}$ out of 54 locations from the dataset for potential placement of sensors measuring temperature or humidity. We also randomly select $t\in\{10,20,50,100,\revised{500}\}$ time steps, and at which we collect pairs of discretized temperature and humidity data in the $n$ locations from the dataset. We set $B_1 = \floor{2n/5}$, $B_2 = \floor{n/2}$, $B'_1= \floor{4B_1/5}$, $B'_2=\floor{3B_2/5}$, and $W=\floor{3(B_1+B_2)/5}$. The parameters $B'_1$ and $B'_2$ are no higher than $B_1$ and $B_2$ respectively to account for the broken sensors, and $W$ is lower than the total number of sensors installed to account for the ones of wrong types. \revised{Ten sets of feasible initial coupled sensor placement, $S_1$ and $S_2$, are generated for each instance of the bisubmodular minimization problem.} The computational results are summarized in Table \ref{res:entropy}.  

\begin{table}[htb]
\small
\begin{center}
\caption{\revised{Computational statistics of the DCG algorithm for the bisubmodular minimization subproblem in robust coupled sensor placement. The statistics are averaged across 10 randomly generated instances.}}
\begin{tabular}{c|c|c|c|c}
\hline
$n$ & $t$ & time (s) &   \# poly-bimatroid cuts & \# nodes \\
\hline
 \multirow{4}{*}{5} & 10 & 0.003  & 2.1  & 2.0 \\
 & 20 & 0.012  & 10.2  & 25.9 \\
 & 50 & 0.011  & 4.9  & 6.4 \\
 & 100 & 0.035  & 11.4  & 28.9 \\
 & 500 & 0.096  &  8.0 & 20.7 \\
\hline
 \multirow{4}{*}{10} & 10 & 0.104  & 62.4  & 493.1\\
 & 20 &  0.238 & 97.1  & 1113.1 \\
 & 50 & 0.339  & 77.9  & 750.0 \\
 & 100 & 0.896  &  114.9 & 1024.1\\
 & 500 &  4.261 & 142.6  & 1162.3\\
\hline
 \multirow{4}{*}{20} & 10 & 0.405  &  103.8  & 2015.4\\
 & 20 &  1.121 & 167.5  & 2906.7\\
 & 50 &  88.371 & 1568.9  & 127030.7 \\
 & 100 &  154.587 & 2257.2  & 122514.8 \\
 & 500 & 277.127  & 1800.5  & 111079.2\\
 \hline
\end{tabular}
\label{res:entropy}
\end{center}
\end{table}

\noindent \revised{The first two columns of Table \ref{res:entropy} are the number of locations for initial placement, $n$, and the number of observations, $t$, respectively. Columns 3-5 summarize the average DCG statistics per instance of the bisubmodular minimization problem \eqref{eq:DCG}, including the runtime (in seconds), the number of poly-bimatroid inequalities added and the number of branch-and-bound nodes visited. Overall, the running time of the DCG algorithm increases as $n$ and $t$ increase. The number of branch-and-bound nodes visited and the number of poly-bimatroid cuts added display similar increasing trends. The DCG algorithm runs quickly in all instances. When $n \leq10$, every bisubmodular minimization problem is solved within five seconds. In the trials with $n=20$, DCG takes no more than five minutes to solve each problem instance on average. } 

\section{Concluding Remarks}
\label{sect:conclusion}
\noindent In this paper, we introduce the poly-bimatroid inequalities which are valid for the convex hull of the epigraph of any bisubmodular function. We further prove that the extremal poly-bimatroid inequalities, along with the trivial inequalities, fully describe the convex hull. The delayed cut generation algorithm we propose based on such cuts \revised{is effective in solving the subproblems associated with the} highly non-linear robust entropy optimization problem which does not admit an equivalent compact mixed-integer linear formulation. Motivated by the effectiveness of cutting plane approaches for submodular maximization (e.g., \cite{NW81,Ahmed2011,Yu2017,Wu2017,Wu2018maxinf}), in a follow-up work \cite{yu2020exact}, we conduct a polyhedral study of  bisubmodular maximization. 

\section*{Acknowledgements}
We thank the reviewer for comments that improved the paper. This research was supported in part through the computational resources and staff contributions provided for the Quest high performance computing facility at Northwestern University which is jointly supported by the Office of the Provost, the Office for Research, and Northwestern University Information Technology.

\bibliography{bibliography}{}

\begin{thebibliography}{}

\bibitem[Ahmed and Atamt\"urk, 2011]{Ahmed2011}
Ahmed, S. and Atamt\"urk, A. (2011).
\newblock Maximizing a class of submodular utility functions.
\newblock {\em Mathematical Programming}, 128(1):149--169.

\bibitem[Ando and Fujishige, 1996]{ando1996structures}
Ando, K. and Fujishige, S. (1996).
\newblock On structures of bisubmodular polyhedra.
\newblock {\em Mathematical Programming}, 74(3):293--317.

\bibitem[Ando et~al., 1996]{ando1996characterization}
Ando, K., Fujishige, S., and Naitoh, T. (1996).
\newblock A characterization of bisubmodular functions.
\newblock {\em Discrete Mathematics}, 148(1-3):299--303.

\bibitem[Atamt{\"u}rk and Narayanan, 2008]{atamturk2008polymatroids}
Atamt{\"u}rk, A. and Narayanan, V. (2008).
\newblock Polymatroids and mean-risk minimization in discrete optimization.
\newblock {\em Operations Research Letters}, 36(5):618--622.

\bibitem[Atamt{\"u}rk and Narayanan, 2019]{atamturk2019submodular}
Atamt{\"u}rk, A. and Narayanan, V. (2019).
\newblock Submodular function minimization and polarity.
\newblock {\em arXiv preprint arXiv:1912.13238}.

\bibitem[Bilbao et~al., 2008]{bilbao2008survey}
Bilbao, J.~M., Fern{\'a}ndez, J.~R., Jim{\'e}nez, N., and L{\'o}pez, J.~J.
  (2008).
\newblock A survey of bicooperative games.
\newblock In {\em Pareto Optimality, Game Theory And Equilibria}, pages
  187--216. Springer.

\bibitem[Bodik et~al., 2004]{bodik2004intel}
Bodik, P., Hong, W., Guestrin, C., Madden, S., Paskin, M., and Thibaux, R.
  (2004).
\newblock Intel lab data.
\newblock {\em Online dataset}.
\newblock \url{http://db.csail.mit.edu/labdata/labdata.html}.

\bibitem[Bouchet, 1987]{bouchet1987greedy}
Bouchet, A. (1987).
\newblock Greedy algorithm and symmetric matroids.
\newblock {\em Mathematical Programming}, 38(2):147--159.

\bibitem[Bouchet and Cunningham, 1995]{bouchet1995delta}
Bouchet, A. and Cunningham, W.~H. (1995).
\newblock Delta-matroids, jump systems, and bisubmodular polyhedra.
\newblock {\em SIAM Journal on Discrete Mathematics}, 8(1):17--32.

\bibitem[Chandrasekaran and Kabadi, 1988]{chandrasekaran1988pseudomatroids}
Chandrasekaran, R. and Kabadi, S.~N. (1988).
\newblock Pseudomatroids.
\newblock {\em Discrete Mathematics}, 71(3):205--217.

\bibitem[Church and Garfinkel, 1978]{church1978locating}
Church, R.~L. and Garfinkel, R.~S. (1978).
\newblock Locating an obnoxious facility on a network.
\newblock {\em Transportation science}, 12(2):107--118.

\bibitem[Fujishige, 1997]{fujishige1997min}
Fujishige, S. (1997).
\newblock A min--max theorem for bisubmodular polyhedra.
\newblock {\em SIAM Journal on Discrete Mathematics}, 10(2):294--308.

\bibitem[Fujishige and Iwata, 2005]{fujishige2005bisubmodular}
Fujishige, S. and Iwata, S. (2005).
\newblock Bisubmodular function minimization.
\newblock {\em SIAM Journal on Discrete Mathematics}, 19(4):1065--1073.

\bibitem[Laporte and Louveaux, 1993]{laporte1993integer}
Laporte, G. and Louveaux, F.~V. (1993).
\newblock The integer l-shaped method for stochastic integer programs with
  complete recourse.
\newblock {\em Operations research letters}, 13(3):133--142.

\bibitem[McCormick and Fujishige, 2010]{mccormick2010strongly}
McCormick, S.~T. and Fujishige, S. (2010).
\newblock Strongly polynomial and fully combinatorial algorithms for
  bisubmodular function minimization.
\newblock {\em Mathematical Programming}, 122(1):87.

\bibitem[Nemhauser and Wolsey, 1981]{NW81}
Nemhauser, G. and Wolsey, L. (1981).
\newblock Maximizing submodular set functions: Formulations and analysis of
  algorithms.
\newblock In Hansen, P., editor, {\em Annals of Discrete Mathematics (11)
  Studies on Graphs and Discrete Programming}, volume~59 of {\em North-Holland
  Mathematics Studies}, pages 279 -- 301. North-Holland Mathematics Studies.

\bibitem[Ohsaka and Yoshida, 2015]{ohsaka2015monotone}
Ohsaka, N. and Yoshida, Y. (2015).
\newblock Monotone k-submodular function maximization with size constraints.
\newblock pages 694--702.

\bibitem[Plastria and Carrizosa, 1999]{plastria1999undesirable}
Plastria, F. and Carrizosa, E. (1999).
\newblock Undesirable facility location with minimal covering objectives.
\newblock {\em European Journal of Operational Research}, 119(1):158--180.

\bibitem[Qi, 1988]{qi1988directed}
Qi, L. (1988).
\newblock Directed submodularity, ditroids and directed submodular flows.
\newblock {\em Mathematical Programming}, 42(1-3):579--599.

\bibitem[Singh et~al., 2012]{singh2012bisubmodular}
Singh, A., Guillory, A., and Bilmes, J. (2012).
\newblock On bisubmodular maximization.
\newblock In {\em Artificial Intelligence and Statistics}, pages 1055--1063.

\bibitem[Svitkina and Fleischer, 2011]{svitkina2011submodular}
Svitkina, Z. and Fleischer, L. (2011).
\newblock Submodular approximation: Sampling-based algorithms and lower bounds.
\newblock {\em SIAM Journal on Computing}, 40(6):1715--1737.

\bibitem[Wu and K\"{u}\c{c}\"{u}kyavuz, 2018]{Wu2018maxinf}
Wu, H. and K\"{u}\c{c}\"{u}kyavuz, S. (2018).
\newblock A two-stage stochastic programming approach for influence
  maximization in social networks.
\newblock {\em Computational Optimization and Applications}, 69(3):563--595.

\bibitem[Wu and K\"{u}\c{c}\"{u}kyavuz, 2019]{Wu2017}
Wu, H. and K\"{u}\c{c}\"{u}kyavuz, S. (2019).
\newblock Probabilistic partial set covering with an oracle for chance
  constraints.
\newblock {\em SIAM Journal on Optimization}, 29(1):690--718.

\bibitem[Yu and Ahmed, 2017]{Yu2017}
Yu, J. and Ahmed, S. (2017).
\newblock Maximizing a class of submodular utility functions with constraints.
\newblock {\em Mathematical Programming}, 162(1-2):145--164.

\bibitem[Yu and K{\"u}{\c{c}}{\"u}kyavuz, 2020]{yu2020exact}
Yu, Q. and K{\"u}{\c{c}}{\"u}kyavuz, S. (2020).
\newblock An exact method for bisubmodular function maximization.
\newblock {\em arXiv preprint arXiv:2008.00988}.
\newblock \url{https://arxiv.org/abs/2008.00988}.

\end{thebibliography}
\bibliographystyle{apalike}
 
\end{document}